\documentclass[12pt]{amsart}

%%%%%%%%%%%%%%%%%%%%%%%%%

\usepackage{amsfonts,amsmath,latexsym,amssymb,verbatim,amsbsy,times}
\usepackage{amsthm}%, pdfsync}
\usepackage{pstricks}
\usepackage{graphicx}
\usepackage{caption}
\captionsetup{justification=centering}

\usepackage[top=0.8 in, bottom=0.8in, left=0.8in, right=0.8in]{geometry}

\usepackage[colorlinks=true, pdfstartview=FitV, linkcolor=blue,citecolor=green, urlcolor=blue]{hyperref}

\usepackage{enumitem}

%Citation keys in blue, small on the side
%\providecommand*\showkeyslabelformat[1]{{\normalfont \tiny#1}}
%\usepackage[notref,notcite,color]{showkeys}
%\definecolor{labelkey}{rgb}{0,0,1}

\theoremstyle{plain}
\newtheorem{THEOREM}{Theorem}[section]

\newtheorem{PROP}[THEOREM]{Proposition}

\theoremstyle{definition}

\theoremstyle{remark}
\newtheorem{REMARK}[THEOREM]{Remark}

\newtheorem{EXAMPLE}[THEOREM]{Example}
%\newtheorem{remark}[THEOREM]{Remark}

%%%%%%%%%%%%%%%%%%%%%%%%%

%%%%%%%%%%%%%%%%%%%%%%%%%

\newcommand{\N}{\ensuremath{\mathbb{N}}}   %%% naturals
   %%% integers
\newcommand{\R}{\ensuremath{\mathbb{R}}}   %%% reals
   %%% complex
\newcommand{\T}{\ensuremath{\mathbb{T}}}   %%% torus

%%%%%%%%%%%%%%% GREEKS %%%%%%%%%%%%%%%5

\def \a {\alpha}
\def \b {\beta}
\def \d {\delta}
\def \g {\gamma}
\def \e {\varepsilon}
\def \f {\varphi}

\def \k {\kappa}
\def \l {\lambda}
\def \L {\Lambda}

\def \n {\nabla}

\def \O {\Omega}

\def\cprime{$'$}

\def \loc {\mathrm{loc}}

%%%%%%%%%%%%%%%%%%%%%% Shortcuts %%%%%%%%%%%%%%%%%%%%%%

\def \< {\langle}
\def \> {\rangle}
\def \p {\partial}

%%%%%%%%%%%% MY FUNCTIONS %%%%%%%%%%%%55

\DeclareMathOperator{\diam}{diam} %
\DeclareMathOperator{\diver}{div} %
\DeclareMathOperator{\tr}{Tr} %

\newcommand{\dist}[2]{\mathrm{dist}(#1,#2)}

\def \dd  {\mathrm{d}}
\def \dt  {\, \mathrm{d}t}

\def \dx  {\, \mathrm{d}x}
\def \dy  {\, \mathrm{d}y}

\def \dm  {\, \mathrm{d}m}
\def \dr  {\, \mathrm{d}r}

\def \ds  {\, \mbox{d}s}

\begin{document}

\title[Lagrangian Trajectories for the One-Dimensional Euler Alignment Model]{On the  Lagrangian Trajectories for the One-Dimensional Euler Alignment Model without Vacuum Velocity}

\author{Trevor M. Leslie}
\address{Department of Mathematics, University of Wisconsin, Madison}
\email{tleslie2@wisc.edu}

\date{\today}

\subjclass[2010]{92D25, 35Q35, 76N10}

\keywords{flocking, alignment, collective behavior, emergent dynamics, critical thresholds, Cucker-Smale}

\thanks{\textbf{Acknowledgment.} Part of this work was completed while the author was supported by NSF grant DMS-1147523 (PI: Andreas Seeger). The author thanks Javier Morales, Roman Shvydkoy, Eitan Tadmor, Changhui Tan, and Yao Yao for productive discussions.}

\begin{abstract}
A well-known result of Carrillo, Choi, Tadmor, and Tan \cite{CCTT2016} states that the 1D Euler Alignment model with smooth interaction kernels possesses a `critical threshold' criterion for the global existence or finite-time blowup of solutions, depending on the global nonnegativity (or lack thereof) of the quantity $e_0 = \p_x u_0 + \phi*\rho_0$.  In this note, we rewrite the 1D Euler Alignment model as a first-order system for the particle trajectories in terms of a certain primitive $\psi_0$ of $e_0$; using the resulting structure, we give a complete characterization of global-in-time existence versus finite-time blowup of regular solutions that \emph{does not require a velocity to be defined in the vacuum}.  We also prove certain upper and lower bounds on the separation of particle trajectories, valid for smooth and weakly singular kernels, and we use them to weaken the hypotheses of Tan \cite{Tan2019WeakSing} sufficient for the global-in-time existence of a solution in the weakly singular case, when the order of the singularity lies in the range $s\in (0,\frac12)$.
\end{abstract}	

\maketitle

\section{Introduction}

\subsection{The Euler Alignment and Cucker--Smale Models}

Consider the Euler Alignment model on $\R^n$:
\begin{equation}
\label{e:m}
\left\{ 
\begin{split}
& \p_t\rho(x,t) + \diver_x(\rho u)(x,t) = 0, \quad \quad x\in \O(t):=\{x\in \R^n:\rho(t)>0\}, \\
& \p_t u(x,t) + u\cdot \n u(x,t) = \k\int_{\O(t)} \phi(x-y)(u(y,t) - u(x,t))\rho(y,t)\dy, \\
& u(x,0) = u_0(x); \; \rho(x,0) = \rho_0(x)\ge 0; \; \O(t) = \O.
\end{split}\right. 
\end{equation}

Here $\rho$ denotes the density profile, which is positive inside the time-dependent set $\O(t)$ and zero elsewhere.  The velocity $u$ is defined inside $\O(t)$, and $\O(t)$ evolves according to the flow $X:\O\to \O(t)$ generated by $u$.  That is, $\dot{X}(\a,t) = u(X(\a, t),t)$, $X(\a,0) = \a$, and $\O(t) = X(\O,t)$, where $\O = \O(0)$ is a given open, bounded subset of $\R^n$. The function $\phi$ represents the (nonnegative) communication protocol, and the parameter $\k>0$ governs the strength of the communications.  The set $\O$ is usually taken to be bounded; boundedness of $\O(t)$ is then propagated by the flow map. We make the global assumption that $\phi$ is radial (and usually radially decreasing).  

The system \eqref{e:m} constitutes a hydrodynamic description of the celebrated Cucker--Smale model of ODEs \cite{CS2007a}, \cite{CS2007b}, which has received a great deal of attention in recent years:
\begin{equation}
\label{e:CS}
\left\{ 
\begin{split}
& \dot{x}_i = v_i, \\
& \dot{v}_i = \k \sum_{j=1}^N \phi(x_{ij}) v_{ji} m_j, \\
& x_i(0) = x_{i0}; \; v_i(0) = v_{i0};\; x_{i0}, v_{i0} \in \R^n; \; i = 1,\ldots, N.
\end{split}
\right. 
\end{equation}
Here, we use the shorthand notation $x_{ij} = x_i - x_j$, $v_{ij} = v_i - v_j$.  The system (CS) describes the evolution of $N$ agents with positions $x_i$, velocities $v_i$, and (fixed) masses $m_i$.  One obtains the system \eqref{e:m} from \eqref{e:CS} by first passing through a kinetic model (c.f. \cite{HT2008}, \cite{CFRT2010}, \cite{MuchaPeszek2018} and references therein).  One can easily (but formally) derive \eqref{e:m} from the kinetic equation by taking appropriate moments of the kinetic equation, then making the `monokinetic ansatz' $f(x,v,t) = \rho(x,t)\d_0(v-u(x,t))$, where $u$ is the macroscopic velocity.  Rigorous derivations are given in \cite{KMT2015}, \cite{FigalliKang2019}. 

The salient feature of the system \eqref{e:CS} is that the interaction of the agents, governed by the communication protocol $\phi$, tends to align the velocities.  One can make stronger statements about this phenomenon by making assumptions on, for example, the communication weight $\phi$, the coupling strength $\k$, and/or connectivity properties of the initial configuration of agents.  There is a wide literature dedicated to formulating appropriate assumptions for concluding that velocity alignment and `flocking' occur; here, `velocity alignment' means that the diameter of the velocities tends to zero, and `flocking' means that the diameter of the agents stays bounded for all time.  As one should expect, the hydrodynamic system \eqref{e:m} also exhibits a tendency for $\phi$ to align the velocities; contributors to velocity alignment and flocking at the discrete level also tend to produce hydrodynamic alignment and flocking.  

Since flocking is a phenomenon that by definition occurs on long time-scales, a prerequisite for flocking to occur is of course the global-in-time existence of a solution.  Let us now turn our attention to consideration of the question of wellposedness, which will be the focus of the present work.  

\subsection{The Roles of Vacuum and of the Problem Domain for Wellposedness}

When studying wellposedness of the hydrodynamic model, one of the first choices one must make is the domain on which to ask that the PDE should be satisfied.  One primary consideration in this choice is how one wants to deal with the vacuum and/or regions of low density.  Such regions create difficulty for the wellposedness theory, because positive density is what drives the alignment force on the right side of \eqref{e:m}$_2$ and thus steers the system away from Burgers'-type blowup.  As alternatives to the formulation \eqref{e:m} listed above, one can also pose the problem on all of $\R^n$ (by imposing a velocity in the vacuum), or in the periodic setting $\T^n$ (where one can avoid discussion of the vacuum entirely by propagating a lower bound on the density).  The periodic setting is useful for studying flocking `in the bulk', and for certain kernels, a wellposedness theory is currently not available without such a lower bound.  (For strongly singular kernels---which in the hydrodynamic setting means $\phi(x)\sim |x|^{-n-s}$, $s\in (0,2)$---the existing wellposedness theory relies on a lower bound on $\rho$ to keep the dissipation of energy active.  C.f. \cite{ShvydkoyTadmorI}, \cite{ShvydkoyTadmorII}, \cite{ShvydkoyTadmorIII}, \cite{DKRT}, \cite{KT}, \cite{Leslie2019} for the 1D theory, and  \cite{Shvydkoy2018NearlyAligned}, \cite{DanchinMuchaPeszekWroblewski2018} for results in higher dimensions.)  However, at the largest scales, the full space setting is a more appropriate choice, whether the PDE is to be satisfied on all of $\R^n$ or on $\O(t)$ only.  We refer to these two formulations on the full space using the phrases `with vacuum velocity' (abbreviated (VV)) and `without vacuum velocity' (abbreviated (NVV)).

If one works with (VV), one can avoid the problem of tracing the dynamics of $\O(t)$ itself; most of the wellposedness literature on the full space setting uses this framework.  However, this technical simplification comes at a cost.  Physically, it requires the vacuum region to move as if it contained massless agents influenced by the dynamics inside $\O(t)$.  And from a technical standpoint, the fact that one must choose an initial velocity inside the vacuum is problematic.  Ideally, such data should be chosen so as to be compatible with the existence or non-existence of a solution to the formulation without vacuum velocity.  However, it is not a priori obvious whether a compatible choice necessarily even exists, and proving finite-time blowup for a given choice of vacuum data does not imply blowup for (NVV).  (On the other hand, proving global-in-time existence for (VV) for a given choice of initial data does guarantee existence for, and thus compatibility with, (NVV).)

The primary purpose of this note is to address the possible discrepancy between (VV) and (NVV), in the context of the 1D Euler Alignment model with smooth $\phi$, where the formulation (VV) is essentially resolved at the level of strong solutions.  We treat (NVV) directly and characterize the initial data that lead to strong solutions.  We show that, in fact, every solution of (NVV) can be extended to a strong solution of (VV) in a natural way.  We also consider the case of `weakly singular' $\phi$, which for the 1D hydrodynamic model means that $\phi(x)\sim |x|^{-s}$ near the origin, for some $s\in (0,1)$.  Wellposedness of the model with these kernels has been studied in \cite{Tan2019WeakSing}.  The latter resolves the case of subcritical and supercritical initial data, and shows that some critical initial data leads to finite-time blowup.  However, the analysis of \cite{Tan2019WeakSing} does not apply to all critical initial data.  Our second main result shows that a large class of critical initial data leads to global-in-time existence and thus slightly sharpens the criteria in \cite{Tan2019WeakSing}.

This paper is organized as follows.  In Section \ref{s:Lag}, we write down the Lagrangian formulation for \eqref{e:m} and make a simple observation that dramatically simplifies the system in 1D.  We state our first result formally here.  In section \ref{s:pf}, we give a self-contained proof of this result.  In section \ref{s:CT}, we compare our Theorem with previous results, especially the `critical threshold' criterion of \cite{CCTT2016} for (VV). In Section \ref{s:WS}, we apply the simplification we observed for the smooth case to prove bounds on the separation of particle trajectories, valid for both smooth and weakly singular kernels.  Our result on the wellposedness for weakly singular kernels follows from these bounds.  

\section{The Lagrangian Formulation}
\label{s:Lag}

When \eqref{e:m} is recast in Lagrangian variables, its relationship to the discrete system is especially self-evident:
\begin{equation}
\label{e:Lag}
\left\{ 
\begin{split}
& \dot{X}_\a = V_\a,  \\
& \dot{V}_\a = \k \int_{\O} \phi(X_{\a\g})V_{\g\a}\dm_\g, \\
& X_\a(0) = \a, \;V_\a(0) = u_0(\a),
\end{split}
\quad \quad 
\a\in \O.\right. 
\end{equation}
Above, we used the notation $X_\a = X(\a,t)$ to denote the flow map, $V_\a=V(\a,t)$ to denote the velocity, and $X_{\a\g} = X_\a - X_\g$, $V_{\a\g} = V_\a - V_\g$.  Given an initial mass measure $m$ and a solution $(X_\a,V_\a)_{\a\in \O}$, the pushforward measure $m^t(E) = m(X^{-1}(E\cap \overline{\O}(t)),t))$ then satisfies \eqref{e:m}$_{1}$ in the sense of distributions:
\begin{equation}
\int_{\R^n} \xi(y,T) \dm^T_y - \int_{\R^n} \xi(y,0) \dm_y = \int_0^T \int_{\R^n} \frac{D\xi}{Dt}(y,s) \dm^t_y,
\quad \quad \xi\in C^\infty_c(\R^n).
\end{equation}
where $\frac{D}{Dt}$ is the material derivative.  Of course, the initial mass measure we have in mind is $\dm_\g = \rho_0(\g)\dd\g$, which allows us to define $\rho(x,t) = \frac{\dm^t_x}{\dx}$, the Radon-Nikodym derivative of $m$ with respect to Lebesgue measure; the latter gives $\rho(X(\a,t),t) = \rho_0(\a) (\det\n_\a X(\a,t))^{-1}$ if $\det \n_\a X>0$.  Since $m$ need not be absolutely continuous with respect to Lebesgue measure in order to make sense of \eqref{e:Lag}, one can sometimes apply results on \eqref{e:m} directly to the discrete model \eqref{e:CS} by using an initial mass measure of the form $m = \sum_{i=1}^N m_i \d_{x_i(0)}$, where $\d_a$ denotes the Dirac delta distribution based at $a\in \R^n$.  For simplicity, we only consider the case where $\rho_0$ is at least continuous on $\O$ (though it may `jump' across $\p\O$), but we continue to use the notation suggestive of a more general mass measure.  

\subsection{Local Existence for Smooth Kernels}
For the convenience of the reader (and because it can be done quickly), we now give a brief discussion of local existence for \eqref{e:m}, for smooth kernels.  We do not deal with the sharpest possible spaces, nor do we attempt to incorporate general locally integrable kernels.  We instead refer the reader to the following references for more details: \cite{CCTT2016}, \cite{HaKangKwon2015}, \cite{Tan2019WeakSing}, \cite{LearShvydkoy2019}.   

If $\phi$ is a smooth kernel, $\O$ is a given open subset of $\R^n$, and $u_0$ is sufficiently smooth on $\O$, then it is straightforward to show that a local-in-time classical solution to \eqref{e:Lag} exists for all time and is as regular as $u_0$ is.  (That is, if $u_0\in C^k(\O)$, then $(X,V)\in C^2([0,\infty);\dot{C}^k(\O))\times C^1([0,\infty);C^k(\O))$.) This solution to \eqref{e:Lag} gives rise to some notion of solution to \eqref{e:m} as long as $X(\cdot,t):\O\to \O(t)$ remains bijective.  We would like to have a notion of solution to \eqref{e:m} that makes sense pointwise, so we specify that $\dm_\g = \rho_0(\g)\dd\g$, with $\rho_0\in C^h(\O)$, $h\in \N\cup \{0\}$, $u_0\in C^k(\O)$, $k>h$, and that $\O$ satisfies the uniform exterior ball condition at every point of its boundary\footnote{There exists $r>0$ such that, for every $z\in \p\O$, there exists $x\notin \O$, with $|x-z| = r$, such that $B(x,r)\cap \O = \emptyset$.  If $n=1$, it is enough for $\O$ to be a finite union of separated open intervals: $\O = \bigcup_{j=0}^J (\ell_j, r_j)$, with $r_{j-1}<\ell_j$ for each $j\in \{1,\ldots, J\}$}. We then define 
\[
\rho(x,t) = \rho_0(X^{-1}(x,t))(\det\n_\a X(X^{-1}(x,t),t))^{-1}, \quad 
u(x,t) = V(X^{-1}(x,t),t),
\quad \quad x\in \O(t).
\]
It follows that for every $t>0$ and $x\in \O(t)$ that there exists a space-time neighborhood $U$ of $(x,t)$ with the following property: on the set $U$, we have that $\rho$ and $u$ are continuous in time, as well as $h$ and $k$ times differentiable in space, respectively, as long as $X(\cdot, t)$ remains bijective and $\det \n_\a X>0$. The latter conditions are true at least on some finite time interval under our assumptions.  However, note carefully that the second condition (positivity of $\det \n_\a X$) does \textit{not}, in general, imply the first (bijectivity of $X(\cdot, t)$).  We refer to solutions to \eqref{e:m} of the type discussed above as \textit{regular solutions}.

%We have not sought the local existence theory in the sharpest possible spaces.  See for example \cite{CCTT2016}, \cite{Tan2019WeakSing}, \cite{LearShvydkoy2019}, \cite{HaKangKwon2015} for more on this (mostly for the formulation (VV)).  %In particular, we have not made use of the energy equality for the system:
%\begin{equation}
%\label{e:EE}
%\int_\O \rho u^2 (x,t)\dx + \int_0^t \int_\O \int_\O \phi(x-y) |u(x,t) - u(y,t)|^2 \rho(x,t) \rho(y,t) \dx \dy = \int_\O \rho_0 u_0^2 (x)\dx
%\end{equation}
%We have also restricted attention to the case of smooth kernels, which is the main focus of the present work.  The local existence theory for \eqref{e:Lag} is not as simple when $\phi$ has a singularity, even an integrable one.  We instead refer to \cite{Tan2019WeakSing} for a discussion of the local existence theory for weakly singular kernels in the formulation (VV).

\subsection{Special Properties in 1 Dimension}
Suppose that $n=1$ and $\phi$ is at least locally  integrable.  Then the right side of \eqref{e:Lag}$_2$ is a perfect time derivative. Indeed, denoting 
\begin{equation}
\f(x) = \int_{0}^x \phi(y)\dy,
\end{equation}
we see that \eqref{e:Lag}$_2$ can be written as 
\begin{align*}
\dot{V}_\a & = -\k\frac{\dd}{\dt} \int_\O \f(X_{\a\g}) \dm_\g,
\end{align*}
or
\begin{equation}
\label{e:Valph}
V_\a = u_0(\a) +\k(\f * m)(\a) - \k\int_\O \f(X_{\a\g}) \dm_\g.
\end{equation}
Equation \eqref{e:Valph} has several important consequences.  First of all, it gives the following time-independent quantity:
\begin{equation}
\psi_0(\a):= u_0(\a) + \k(\f * m)(\a) = V_\a + \k\int_\O \f(X_{\a\g}) \dm_\g,
\quad \a\in \O.
\end{equation}
(We stress that $\psi_0$ is defined only in $\O$.)  We can thus write \eqref{e:Lag} as a first-order system:
\begin{equation}
\label{e:1Dchar}
\left\{ 
\begin{split}
& \dot{X}_\a = \psi_0(\a) - \k\int_\O \f(X_{\a\g})\dm_\g, \\
& X_\a(0) = \a, \quad \psi_0 = u_0 + \k\f*m, \quad \a\in \O.
\end{split}
\right. 
\end{equation}
And finally, one has the following equation for the evolution of the distance between two Lagrangian trajectories.  Denoting $\psi_{\b\a} = \psi_0(\b) - \psi_0(\a)$, we can write
\begin{equation}
\label{e:dif}
\dot{X}_{\b\a} = \psi_{\b\a} - \k \int_{X_\a}^{X_\b} \int_\O  \phi(y - X_\g)  \dm_\g \dy,
\quad \quad \a,\b\in \O, \;\a<\b.
\end{equation}
Equation \eqref{e:dif} gives crucial information about the wellposedness of \eqref{e:m}.  Indeed, establishing bijectivity of the flow map $X(\cdot, t)$ is one of the main steps in proving the global-in-time existence of solutions to \eqref{e:m}, so lower bounds on $X_{\b\a}$ like the ones we obtain below using \eqref{e:dif} are key.  

We can now state our main Theorem on the 1D formulation (NVV), with smooth $\phi$.  (See Section \ref{s:WS} for a discussion of what is possible for certain kernels with weakly singular kernels.)

\begin{THEOREM}
	\label{t:main}
	Suppose $\phi$ is a smooth, radially decreasing interaction kernel on $\R$.  Let $\O$ be a finite union of intervals: $\O = \bigcup_{j=0}^J (\ell_j, r_j)$, with $r_{j-1}<\ell_j$ for each $j\in \{1,\ldots, J\}$.  Assume $\rho_0\in C^{h}(\O)$ and $u_0\in C^{k}(\O)$ ($h,k\in \N\cup \{0\}$, $k>h$).  Then $\eqref{e:m}$ has a global-in-time regular solution $(\rho, u)$ associated to the initial data $(\rho_0, u_0)$, if and only if $\psi_0=u_0 + \f*\rho_0$ is monotonically increasing on $\O$. Furthermore, if $\psi_0$ is monotonically increasing on $\O$, then $u_0$ has an extension $\widetilde{u}_0$ to all of $\R$ such that $(\rho_0, \widetilde{u}_0)$ yields a global-in-time solution of the Euler Alignment model with vacuum velocity, which is as regular as the initial data $(\rho_0, \widetilde{u}_0)$ allows.  If $\psi_0(r_{j-1})<\psi_0(\ell_{j})$ for each $j$, then $\widetilde{u}_0$ can be taken to be as smooth as $u_0$.  
\end{THEOREM}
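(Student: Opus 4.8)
The plan is to base the entire argument on the distance equation \eqref{e:dif} together with a single a priori bound. Since $\phi$ is smooth and radially decreasing it is bounded by $\phi(0)$, and the total mass $M=\int_\Omega\dm_\gamma$ is finite, so the internal field $\Phi(y,t):=\int_\Omega\phi(y-X_\gamma)\dm_\gamma$ obeys $0\le\Phi\le\phi(0)M=:C$ for all $y$ and $t$. Substituting this into \eqref{e:dif} gives, for $\alpha<\beta$ in $\overline\Omega$ and as long as $d:=X_{\beta\alpha}>0$, the two-sided inequality
\[
\psi_{\beta\alpha}-\kappa C\,d\ \le\ \dot d\ \le\ \psi_{\beta\alpha},
\]
where the upper bound drops the nonnegative integral and the lower bound uses $\int_{X_\alpha}^{X_\beta}\Phi\dy\le C d$. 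This one estimate drives both directions of the equivalence.

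For sufficiency, if $\psi_0$ is non-decreasing on $\Omega$ then $\psi_{\beta\alpha}\ge0$ for every $\alpha<\beta$, the lower bound becomes $\dot d\ge-\kappa C d$, and Gr\"onwall yields $X_{\beta\alpha}(t)\ge(\beta-\alpha)e^{-\kappa C t}$; this prevents $d$ from ever reaching $0$, so it is valid for all $t$, and letting $\beta\to\alpha$ gives $\partial_\alpha X\ge e^{-\kappa C t}>0$. Hence $X(\cdot,t)$ stays an increasing bijection with positive Jacobian for all time; since the maximum principle for \eqref{e:Lag}$_2$ keeps each $V_\alpha$ inside $[\min u_0,\max u_0]$, trajectories cannot escape in finite time either, and the local theory of Section \ref{s:Lag} upgrades this to a global regular solution. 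For necessity I argue the contrapositive: if $\psi_0$ is not non-decreasing, choose $\alpha<\beta$ in $\overline\Omega$ with $\psi_{\beta\alpha}<0$ (such a pair may sit inside one component or straddle a vacuum gap, which the estimate does not distinguish, as it uses only $\phi\ge0$ and the continuous extension of $\psi_0$ to $\overline\Omega$). While a regular solution exists its trajectories remain ordered, so $d>0$ and the upper bound $\dot d\le\psi_{\beta\alpha}<0$ forces $d(t)\le(\beta-\alpha)+\psi_{\beta\alpha}t$ to vanish at the finite time $t^*=-(\beta-\alpha)/\psi_{\beta\alpha}$, contradicting $d>0$; thus the regular solution---and the bijectivity of $X(\cdot,t)$---must break down at or before $t^*$.

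For the extension claims I prescribe $\psi_0$ rather than $u_0$. Extending $\rho_0$ by zero to $\R$, the conserved quantity for (VV) is $\widetilde\psi_0=\widetilde u_0+\kappa\,\varphi*\rho_0$, and $\partial_x\widetilde\psi_0=\partial_x\widetilde u_0+\kappa\,\phi*\rho_0$ is exactly the critical-threshold quantity of \cite{CCTT2016}. Because $\psi_0$ is non-decreasing on $\overline\Omega$, in particular $\psi_0(r_{j-1})\le\psi_0(\ell_j)$ across each gap, it extends to a globally non-decreasing continuous $\widetilde\psi_0$ on $\R$; setting $\widetilde u_0:=\widetilde\psi_0-\kappa\,\varphi*\rho_0$ returns $u_0$ on $\Omega$, and the sufficiency argument applied with $\Omega=\R$ produces a global (VV) solution of whatever regularity $(\rho_0,\widetilde u_0)$ permits. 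When every gap inequality is strict, $\psi_0(r_{j-1})<\psi_0(\ell_j)$, there is genuine slack in each gap, so I can instead take $\widetilde\psi_0$ to be a strictly increasing $C^k$ connector matching the full $k$-jet of $\psi_0$ at $r_{j-1}^-$ and $\ell_j^+$; combined with the convolution-smoothed regularity of $\varphi*\rho_0$, this makes $\widetilde u_0$ globally $C^k$, as smooth as $u_0$.

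The analytic core is short once the bound on $\Phi$ is recorded. I expect the real work to lie in the last step: constructing, in each vacuum gap, a strictly increasing $C^k$ function that matches the prescribed one-sided $k$-jets of $\psi_0$ at both endpoints while preserving monotonicity throughout. The strict inequality $\psi_0(r_{j-1})<\psi_0(\ell_j)$ is precisely the slack that makes this constrained Hermite interpolation possible. A secondary technical point requiring care is confirming that $\varphi*\rho_0$ is itself $C^k$ across $\partial\Omega$ despite the jump of $\rho_0$ there, which rests on the smoothing of convolution against $\varphi$.
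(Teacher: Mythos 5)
Your proposal is correct and follows essentially the same route as the paper: your two-sided inequality $\psi_{\beta\alpha}-\kappa\phi(0)M_0\,X_{\beta\alpha}\le \dot{X}_{\beta\alpha}\le \psi_{\beta\alpha}$ is precisely the content of Proposition~\ref{p:psibasic} and estimate \eqref{e:XbalwrL}, and your treatment of the vacuum-velocity extension (extend $\psi_0$ monotonically, then set $\widetilde{u}_0=\widetilde{\psi}_0-\kappa\varphi*\rho_0$) is the paper's argument as well. The only cosmetic difference is that you recover the lower bound on $\partial_\alpha X$ by letting $\beta\to\alpha$ in the separation estimate instead of integrating the variational equation for $\partial_\alpha X$ directly, an equivalence the paper itself points out in the remark following \eqref{e:XbalwrL}.
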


\section{Global-in-Time Existence without Vacuum Velocity}
\label{s:pf}

In this section, we restrict attention to the case $n=1$.  In light of the local existence theory for smooth kernels discussed in the previous section, we now treat the bijectivity of $X$ and the positivity of $\p_\a X$.  We frame some of the discussion in terms of more general locally integrable kernels so that it does not have to be re-done in Section \ref{s:WS} when we discuss weakly singular kernels.

The following Proposition is an easy consequence of equation \eqref{e:dif}. 
\begin{PROP}
	\label{p:psibasic}
	Assume $\phi\in L^1_\loc(\R)$ and $u_0$, $\rho_0$ are given and sufficiently smooth on the bounded open set $\O = \{\rho_0>0\}\subset \R$.  A necessary condition for the global-in-time existence of a classical solution to \eqref{e:m} is that the function $\psi_0 = u_0 + \f*\rho_0$ should be monotonically increasing on $\O$. On the other hand, a sufficient condition for the bijectivity of the flow map $X(\cdot, t):\O\to \O(t)$ (on the time interval of existence of \eqref{e:Lag}) is that $\psi_0$ is strictly increasing on $\O$.  
\end{PROP}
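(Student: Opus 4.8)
The plan is to analyze the sign of $\dot X_{\b\a}$ using the distance equation \eqref{e:dif}, since $X(\cdot,t)$ is monotone (hence bijective onto its image) exactly when the ordering of trajectories is preserved, i.e. when $X_{\b\a}>0$ for all $\a<\b$ on the interval of existence. First I would observe that the double integral term in \eqref{e:dif},
\begin{equation*}
\k \int_{X_\a}^{X_\b} \int_\O \phi(y-X_\g)\dm_\g\dy,
\end{equation*}
is nonnegative whenever $X_\a<X_\b$ (since $\phi\ge 0$ and $m\ge 0$), and that it vanishes in the limit as $X_{\b\a}\to 0^+$. This is the structural observation that makes the whole argument work: the alignment force acts as a \emph{restoring} term that opposes collisions.

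For the sufficiency direction, suppose $\psi_0$ is strictly increasing, so $\psi_{\b\a}>0$ for every $\a<\b$ in $\O$. I would argue that $X_{\b\a}$ cannot reach zero in finite time. The clean way is a contradiction argument: fix $\a<\b$ and let $t^*$ be the first time (if any) at which $X_{\b\a}(t^*)=0$; at that instant the integral term vanishes while $\psi_{\b\a}>0$, so \eqref{e:dif} gives $\dot X_{\b\a}(t^*)=\psi_{\b\a}>0$, contradicting the fact that $X_{\b\a}$ was approaching $0$ from above. More carefully, one should rule out the possibility of $X_{\b\a}\to 0$ asymptotically at the left endpoint of the collision time; here I would use that the integral term is bounded above by $\k \|\phi\|_{L^1_{\loc}} m(\O)$ times $X_{\b\a}$ near collision (or simply by a constant), giving a differential inequality $\dot X_{\b\a}\ge \psi_{\b\a} - C\,X_{\b\a}$, from which a Grönwall-type lower bound $X_{\b\a}(t)>0$ follows on the whole existence interval. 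This establishes that distinct trajectories never collide, so $X(\cdot,t)$ is strictly order-preserving and hence injective; surjectivity onto $\O(t):=X(\O,t)$ is immediate from the definition.

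For the necessity direction, I would argue the contrapositive: if $\psi_0$ fails to be monotonically increasing, there exist $\a<\b$ in $\O$ with $\psi_{\b\a}<0$. Evaluating \eqref{e:dif} at $t=0$, where $X_\a(0)=\a<\b=X_\b(0)$, the integral term is a \emph{finite} nonnegative quantity, and one can choose $\a,\b$ close enough together that this term is dominated by $|\psi_{\b\a}|$ — using that the inner integral $\int_\O \phi(y-X_\g)\dm_\g$ is bounded on compact subsets of $\O$ and that the outer integral is over an interval of length $X_{\b\a}(0)=\b-\a$, so the whole term is $O(\b-\a)$ while $\psi_{\b\a}$ can be made to stay negative and bounded away from $0$ after rescaling. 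This forces $\dot X_{\b\a}(0)<0$, so $X_{\b\a}$ decreases; iterating (or again using the differential inequality to prevent the restoring term from saving the situation in time) shows $X_{\b\a}$ reaches $0$ in finite time, destroying bijectivity of the flow and hence the existence of a classical (regular) solution.

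I expect the main obstacle to be the necessity half, specifically making the collision argument fully rigorous: showing that a non-monotone $\psi_0$ \emph{forces} a crossing in finite time rather than merely an initial decrease that the restoring integral term might eventually reverse. The delicate point is that the integral term grows as trajectories separate elsewhere, so one must choose the pair $\a<\b$ with $\psi_{\b\a}<0$ and control the competition between the (negative) driving term $\psi_{\b\a}$ and the (positive, but initially small) alignment term over the relevant time scale. I anticipate handling this by choosing $\a,\b$ to be a pair realizing a strict local decrease of $\psi_0$ and exploiting that near collision the alignment term is quantitatively small (of order $X_{\b\a}$), so that the differential inequality $\dot X_{\b\a}\le \psi_{\b\a}+C\,X_{\b\a}$ with $\psi_{\b\a}<0$ guarantees $X_{\b\a}$ hits $0$ before the linear-in-$X_{\b\a}$ correction can overcome the negative constant drift.
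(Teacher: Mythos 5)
Your sufficiency argument is essentially the paper's own: at the first collision time $t_0$ the integral in \eqref{e:dif} vanishes while $\psi_{\b\a}>0$, so $\dot X_{\b\a}(t_0)=\psi_{\b\a}>0$, which forces $X_{\b\a}<0$ just before $t_0$, a contradiction. Your extra Gr\"{o}nwall refinement is unnecessary (on any finite existence interval, a first collision time exists by continuity whenever a collision occurs at all, so there is no ``asymptotic'' case to rule out), and its constant is unjustified: for a general $\phi\in L^1_\loc$ the inner integral $\int_\O \phi(y-X_\g)\dm_\g$ need not be bounded by $\|\phi\|_{L^1_\loc}\, m(\O)$, since mass can concentrate under the flow. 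The paper's contradiction argument deliberately avoids needing any such bound.

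The necessity half, however, contains a genuine error: you have the role of the alignment term backwards. In \eqref{e:dif} the term $-\k\int_{X_\a}^{X_\b}\int_\O \phi(y-X_\g)\dm_\g\dy$ is \emph{nonpositive} whenever $X_{\b\a}>0$, because $\phi\ge 0$ and $m\ge 0$: it is not a ``restoring term that opposes collisions'' but a contracting term that opposes separation. (What makes sufficiency work is not that this term opposes collisions, but that it vanishes at contact.) Consequently, if $\psi_{\b\a}<0$ for some $\a<\b$, then $\dot X_{\b\a}\le \psi_{\b\a}<0$ for as long as $X_{\b\a}>0$, so $X_{\b\a}(t)\le (\b-\a)+t\,\psi_{\b\a}$ and the trajectories collide by time $(\b-\a)/|\psi_{\b\a}|$; this one-line estimate is the paper's entire necessity proof, and it requires no smallness of $\b-\a$, no domination of the integral term, and no differential inequality. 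Your proposed route does not close: the inequality $\dot X_{\b\a}\le \psi_{\b\a}+C\,X_{\b\a}$ places the alignment term on the wrong side (and such a $C$ need not exist for $\phi\in L^1_\loc$, as above), and the plan to choose $\a,\b$ close together with $|\psi_{\b\a}|$ bounded away from zero is impossible in general. The failure of monotonicity may occur only across a gap between components of $\O$ (e.g.\ $\O=(0,1)\cup(2,3)$ with $\psi_0$ increasing on each piece but dropping across the gap), so that every pair with $\psi_{\b\a}<0$ has $\b-\a\ge 1$; alternatively $\psi_0$ may decrease with slope $-\e$ for small $\e$, in which case $|\psi_{\b\a}|=\e(\b-\a)$ can never dominate $C(\b-\a)$ once $\e<C$. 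The correct sign observation makes all of this machinery unnecessary and is precisely the content you were missing.
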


\begin{comment}
\begin{REMARK}
\label{r:XbarvX}
Note that Proposition \ref{p:psibasic} does not rule out the possibility that $\overline{X}(\a,t) = \overline{X}(\b,t)$ for some $\a,\b\in \p\O$ and $t>0$, if $\psi_0(\a) = \psi_0(\b)$ (which is still allowed for $\psi_0$ which is strictly increasing on $\O$).  We eliminate this possibility below in the case of bounded kernels $\phi$, but we are not able to do this in the case of general locally integrable kernels.  See Section \ref{s:WS} below for more discussion.
\end{REMARK}
\end{comment}

\begin{proof}
	If $\psi_0(\a)>\psi_0(\b)$, but $\a<\b$, then \eqref{e:dif} shows that $X_{\b\a}(t) \le (\b - \a) - \k (\psi_0(\a) - \psi_0(\b))$ on the time interval of existence, which is at most $\frac{\b - \a}{\k(\psi_0(\a)-\psi_0(\b))}$.
	
	Assume on the other hand that $\psi_0$ is strictly increasing, and pick $\a<\b\in \O$.  If $X_{\b\a} = 0$ at some first time $t_0\in (0,+\infty)$, then $\dot{X}_{\b\a}(t_0) = \psi_{\b\a}>0$, which indicates that $X_{\b\a}(t)<0$ at some time $t$ previous to $t_0$, a contradiction.
\end{proof}

Despite its simplicity, Proposition \ref{p:psibasic} is, to the best of the author's knowledge, new.  Observe that it does not require any knowledge of $\phi$ other than its local integrability.

Let us similarly look at $\p_\a X$, at first without specifying more about $\phi$.  Differentiating \eqref{e:1Dchar}$_1$ with respect to $\a$ yields 
\begin{equation}
\p_\a \dot{X}_\a = \psi_0'(\a) - \k \left( \int_\O \phi(X_{\a\g})\dm_\g\right) \p_\a X_\a. 
\end{equation}
Or integrating, 
\begin{equation}
\p_\a X_\a(t) = \exp\left( - \k \int_0^t \int_\O \phi(X_{\a\g})\dm_\g \ds \right) + \psi_0'(\a) \int_0^t \exp\left( - \k \int_s^t \int_\O \phi(X_{\a\g})\dm_\g \dr \right) \ds.  
\end{equation}
Clearly $\p_\a X_\a(t)$ reaches zero in finite time (prior to time $t = |\psi_0'(\a)|^{-1}$) if $\psi_0'(\a)<0$, in agreement with Proposition \ref{p:psibasic}.  So let us assume that $\psi_0'(\a)\ge 0$.  In order to get a lower bound on $\p_\a X_\a(t)$, we need one additional assumption.  Suppose that 
\begin{equation}
\label{e:convbd}
\k \int_\O \phi(X_{\a\g})\dm_\g \le C,
\quad \text{ for all } \a\in \O,
\end{equation}
on the time interval of existence. Then we get the lower bound 
\begin{equation}
\label{e:paXlwr}
\p_\a X_\a(\a,t) \ge \exp(- C t) + \frac{\psi_0'(\a)}{C}[1 - \exp(-Ct)].
\end{equation}
In the case where $\phi$ is bounded, \eqref{e:convbd} is trivially satisfied with $C = \k\|\phi\|_{L^\infty} M_0$ (where $M_0 = \|m\|$, the total mass, i.e., the total variation of $m$).  If $\phi$ has a singularity at zero, it is no longer clear whether \eqref{e:convbd} holds.  We will consider this case in Section \ref{s:WS}.  We record the estimate \eqref{e:paXlwr} as a Proposition:

\begin{PROP}
\label{p:paXlwr}
Assume $\phi\in L^1_\loc(\R)$ and $u_0$, $\rho_0$ are given and sufficiently smooth on the bounded open set $\O = \{\rho_0>0\}\subset \R$.  Assume that $\psi_0'(\a)\ge 0$ for all $\a\in \O$ and that \eqref{e:convbd} holds.  Then on the time interval of existence of the solution to \eqref{e:Lag}, the lower bound \eqref{e:paXlwr} holds for all $\a\in \O$.
\end{PROP}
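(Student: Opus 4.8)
The plan is to regard the differential equation
\[
\p_\a \dot{X}_\a = \psi_0'(\a) - \k \left( \int_\O \phi(X_{\a\g})\dm_\g\right) \p_\a X_\a
\]
as a scalar linear first-order ODE in the time variable $t$, with $\a$ held fixed as a parameter and with the quantity $y(t) := \p_\a X_\a(t)$ as the unknown. Writing $g(t) := \k \int_\O \phi(X_{\a\g})\dm_\g$, this reads $\dot{y} = \psi_0'(\a) - g(t)\,y$, subject to the initial condition $y(0) = \p_\a X_\a(0) = 1$ (which follows from $X_\a(0) = \a$). Multiplying by the integrating factor $\exp\left(\int_0^t g(s)\ds\right)$ and integrating from $0$ to $t$ produces exactly the representation formula recorded immediately before the statement, namely $y(t)$ equals the sum of the homogeneous term $\exp\left(-\int_0^t g(s)\ds\right)$ and the inhomogeneous term $\psi_0'(\a)\int_0^t \exp\left(-\int_s^t g(r)\dr\right)\ds$.

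From here I would obtain the lower bound by estimating each of the two exponential factors using only \eqref{e:convbd}, i.e.\ $0 \le g(s) \le C$. For the homogeneous term, $\int_0^t g(s)\ds \le Ct$ gives $\exp\left(-\int_0^t g(s)\ds\right) \ge \exp(-Ct)$. For the inhomogeneous term, the hypothesis $\psi_0'(\a) \ge 0$ is what lets me bound it from below: since the prefactor is nonnegative, I may replace the integrand by the smaller quantity $\exp(-C(t-s))$, using $\int_s^t g(r)\dr \le C(t-s)$, and then evaluate $\int_0^t \exp(-C(t-s))\ds = C^{-1}[1 - \exp(-Ct)]$. Adding the two estimates yields precisely \eqref{e:paXlwr}.

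The computation itself is entirely routine; the only two points that require genuine care are both minor. First, one must justify differentiating the Lagrangian equation \eqref{e:1Dchar}$_1$ in $\a$ and interchanging $\p_\a$ with the time-integration---this is licensed by the regularity of the solution to \eqref{e:Lag} furnished by the local existence theory, so that $y(t) = \p_\a X_\a(t)$ is a genuine $C^1$ solution of the stated ODE on the interval of existence. Second, one must keep the direction of the inequalities consistent: the sign condition $\psi_0'(\a) \ge 0$ is used essentially, since without it the inhomogeneous term could be negative and the replacement of $\exp(-\int_s^t g)$ by the smaller $\exp(-C(t-s))$ would reverse the desired inequality (indeed, when $\psi_0'(\a) < 0$ one expects $\p_\a X_\a$ to vanish in finite time, consistent with Proposition \ref{p:psibasic}). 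I do not expect any substantial obstacle beyond these bookkeeping matters.
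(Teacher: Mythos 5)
Your proposal is correct and takes essentially the same route as the paper: the paper likewise differentiates \eqref{e:1Dchar}$_1$ in $\a$, solves the resulting linear ODE for $\p_\a X_\a$ by an integrating factor to get the same Duhamel representation, and then bounds the two terms using \eqref{e:convbd} together with $\psi_0'(\a)\ge 0$ to arrive at \eqref{e:paXlwr}. The regularity point you flag is exactly what the paper's local existence discussion supplies, so nothing is missing.
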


Finally, let us use the assumption that $\phi$ is bounded in order to conclude bijectivity of $X$ when $\psi_0$ is just monotonically increasing rather than strictly increasing.  If $\phi$ is bounded, we can actually write down an explicit lower bound for $X_{\b\a}$ if $\psi_{\b\a}\ge 0$.  Indeed, from \eqref{e:dif}, we see that 
\[
\dot{X}_{\b\a} \ge \psi_{\b\a} - \k\|\phi\|_{L^\infty} M_0 X_{\b\a},
\]
and thus 
\begin{equation}
\label{e:XbalwrL}
X_{\b\a} \ge  (\b - \a) \exp( -\k\|\phi\|_{L^\infty} M_0 t) + \frac{\psi_{\b\a}}{\k \|\phi\|_{L^\infty} M_0}( 1 - \exp( -\k\|\phi\|_{L^\infty} M_0 t)).
\end{equation}

We have now proved that $\p_\a X(t)\ge c(t)>0$ and that $X$ remains bijective for all time, provided that $\psi_0$ is monotonically increasing.  This proves the existence part of Theorem \ref{t:main}.  The extension to a solution of (VV) is now a triviality: If $\psi_0:\O\to \R$ is increasing on $\O$, then it has an increasing extension $\widetilde{\psi}_0:\R\to \R$ which can be taken to be as smooth as $\psi_0$ if $\psi_0(r_{j-1})<\psi_0(\ell_{j})$ for each $j$, and can be taken to be piecewise as smooth as $\psi_0$ if $\psi_0(r_{j-1})=\psi_0(\ell_{j})$ for some $j$.  Then one can define $\widetilde{u}_0:=\psi_0 - \f*\rho_0$ to finish.  

\begin{REMARK}
We could have of course derived the bound \eqref{e:paXlwr} (with $C = \k \|\phi\|_{L^\infty}M_0$) from \eqref{e:XbalwrL} in the case of bounded kernels.  We  choose to work with \eqref{e:paXlwr} directly in order to include it more naturally later in our discussion of weakly singular kernels in Section \ref{s:WS}.
\end{REMARK}

%One can also recover some classical flocking/non-flocking criteria from \eqref{e:dif}, without specifying much more about $\phi$.  In particular, one can easily see that if $\phi$ is not integrable at infinity (the so-called `heavy-tail' condition), then every smooth solution flocks.  On the other hand, suppose $\phi$ is integrable and radially decreasing; then if $u_0(\b)-u_0(\a)>\k M_0 \int_{\b-\a}^{\infty} \phi(\g)\dd\g$, one has separation of $X(\a,t)$ and $X(\b,t)$ at a rate that is at least linear in time, and thus flocking does not occur. This criterion is essentially due to Ha and Liu \cite{HaLiu2008}, though their argument is at the discrete level.  

\section{Further Discussion on Global Existence}
\label{s:CT}

To get local existence for the formulation (VV), one can, for example, add a viscous regularization to the model (local existence for the regularized model can be established easily). Then one proves a priori estimates independent of the viscosity parameter to get a standard continuation criterion for a solution that is known to exist on the time interval $[0,T)$:
\begin{equation}
\label{e:BKM}
\int_0^T |\n u(x,t)|\dt <+\infty.
\end{equation}
See for example \cite{LearShvydkoy2019}, \cite{Tan2019WeakSing}, \cite{CCTT2016} for more details.  (Of these, only \cite{LearShvydkoy2019} explicitly includes the viscous regularization argument.) 
The usual way of establishing a bound on $|\n u|_{L^\infty}$, and thus existence for (VV), involves the quantity $e = \diver u + \phi*\rho$, which satisfies the equation 
\begin{equation}
\label{e:econt}
\p_t e + \diver(u e) = (\diver u)^2 - \tr(\n u)^2.  
\end{equation}
In dimension $n=1$, the right hand side of this equation is zero, and one has 
\begin{equation}
\frac{\dd}{\dt} e(X(\a,t),t) = -[e(e - \k \phi*\rho)](X(\a,t),t).
\end{equation}
In the case of smooth kernels, one can read off from this equation that $e(X(\a,t),t)$ moves toward the (bounded) quantity $\k \phi*\rho$ if $e_0(\a)>0$, tends to $-\infty$ in finite time if $e_0(\a)<0$, and remains zero for all time if $e_0(\a) = 0$.  Again since $\phi*\rho$ is bounded, it follows that $\p_x u(X(\a,t),t)$ remains bounded for all $\a$ if and only if $e(X(\a,t),t)$ does, which occurs if and only if $e_0$ is everywhere nonnegative.  One therefore refers to the condition $e_0\ge 0$ as the \textit{critical threshold condition} for smooth kernels.  

Of course, the quantity $e_0 = \p_x u_0 + \phi*\rho_0$ is just the derivative of $\widetilde{\psi}_0$ from the previous section, and monotonically increasing $\widetilde{\psi}_0$ is the same as $e_0\ge 0$.  We emphasize, however, that $e_0\big|_\O \ge 0$ is not sufficient to guarantee existence of a solution to \eqref{e:m}.  This is clear from Theorem \ref{t:main}, and is illustrated even more vividly by the following example.  

\begin{EXAMPLE}
\label{ex:HKK}
	Let $\phi$ be any radially decreasing smooth interaction protocol.  Choose $\e>0$, then choose (if possible) $\d_0>0$ so that 
	\begin{equation}
	\k\int_{-\d_0}^{\d_0} \int_\R \phi(\b - \g) \;\dd\beta \; \dd\g = 2\e.
	\end{equation}
	If no such $\d_0$ exists, put $\d_0 = +\infty$. Choose $\d>0$ and define
	\[
	\O_+ = (\d, \tfrac12 + \d), 
	\quad 
	\O_- = (-\tfrac12 - \d, -\d),
	\quad  
	\O = \O_+ \cup \O_-,
	\] 
	\[
	\rho_0 = 1_\O,
	\quad u\big|_{\O_\pm} = \mp \e.  
	\]
	Then $\p_x u_0 \equiv 0$ on $\O$ and thus $\inf_\O e_0>0$.  However, we have global-in-time existence if $\d\ge \d_0$ and blowup at time prior to $T = \d/(\k\e)$ if $0<\d<\d_0$.  Indeed, one has $\psi_0(\d) - \psi_0(-\d) = -2\e + \k \int_{-\d}^{\d} \int_\O \phi(y - X_\g) \dm_\g \dy$.  If $\d<\d_0$, then this quantity is negative, so the characteristics originating in $\O_+$ and $\O_-$ cross in finite time.  On the other hand, if $\d\ge \d_0$, then $\psi_0$ is monotonically increasing on all of $\O$ and thus we have global-in-time existence.
\end{EXAMPLE}

The above example shows several points: First of all, it  demonstrates explicitly that $e_0\big|_\O \ge 0$ is not a sufficient condition for the existence of a solution to (NVV).  And although we used Theorem \ref{t:main} to find our sharp $\d_0$, one does not need to introduce the function $\psi_0$ in order to show that crossing of characteristics occurs for small enough $\d>0$.  In particular, one can construct similar explicit examples in higher dimensions (where the function $\psi_0$ is not available).  In higher dimensions, one can do this even if $\O$ is connected.  

Note also that the initial velocity in Example \ref{ex:HKK} is arbitrarily small.  This implies that any Theorem on existence of solutions to (NVV), even a small data result, must somehow take into account the geometry of $\O$.  The requirement that $\psi_0$ must be monotonically increasing reflects the separation between intervals (and thus the geometry of $\O$) in the following way: it guarantees that the alignment force `has enough room' to avoid collision of two adjacent intervals, by balancing the difference in velocities $u_0(\ell_j) - u_0(r_{j-1})$ with the difference $\phi*\rho_0(\ell_j) - \phi*\rho_0(r_{j-1})$ (the latter indicates the capacity of the trajectories $X(r_{j-1},t)$ and $X(\ell_j,t)$ to avoid collision).  The only other work known to the author that treats wellposedness of \eqref{e:m} directly (as opposed to the formulation (VV)) is \cite{HaKangKwon2015}, which claims a small data result in any dimension.  While their proof does establish a lower bound on $\p_\a X$, their smallness assumption does not account for the geometry of $\O$ and thus cannot be true in full generality, as demonstrated by Example \ref{ex:HKK}.  It would be interesting to study precisely what geometric condition is needed in order to rule out crossing of characteristics of the kind that is not detected by $\n_\a X$.  

In higher dimensions, the right side of \eqref{e:econt} is not zero, which complicates the analysis.  As a result, the wellposedness theory is far less developed in higher dimensions than in the 1D case.  Sufficient conditions for existence are available in \cite{TT2014}, \cite{HT2016}.  See also the recent work \cite{LearShvydkoy2019} of Lear and Shvydkoy for an analysis of the special case of `unidirectional' flows.

\begin{REMARK}
\label{r:extrest}
Another proof of the equivalence of global-in-time existence to (NVV) with monotonically increasing $\psi_0$ on $\O$ goes as follows: If $\psi_0$ is not monotonically increasing, then a global-in-time solution cannot exist, by Proposition \ref{p:psibasic}.  If $\psi_0$ is monotonically increasing, consider a monotonically increasing Lipschitz extension $\widetilde{\psi}_0$ to $\R$.  Then use the work of \cite{CCTT2016} to conclude global-in-time existence, since $e_0=\widetilde{\psi}_0'\ge 0$ everywhere (except possibly on $\p\O$).  The author feels, however, that the self-contained proof in Section \ref{s:pf} gives a more transparent picture of why the Theorem should be true.  
\end{REMARK}

%For example, the work \cite{TT2014} gives criteria for the existence of a global-in-time solution that distinguish between the vacuum and non-vacuum regions.  (The distinct criteria were later unified in \cite{CCTT2016}, which sharpens many of the results of \cite{TT2014}.) The work \cite{HT2016} contains a discussion of a `finite horizon' of influence

\section{Weakly Singular Kernels}
\label{s:WS}

In this section, we discuss the implications of the simplified system \eqref{e:1Dchar} on the wellposedness theory for weakly singular kernels.  We give some bounds for the separation between trajectories, and we use these bounds to sharpen somewhat the known criteria sufficient for the existence of a global-in-time solution, in the case where the order of the singularity is less than $\frac12$.  In what follows, we set $\k=1$ and encode the coupling strength directly into the protocol $\phi$.

\subsection{Bounds on the Separation of Trajectories}
\label{ss:sep}

We first give a lower bound on the separation of particle trajectories, assuming an upper bound on $\phi$ of the form $\phi(x)\le \L|x|^{-s}$ near the origin.  Note that the case $s=0$ has already been discussed, so we don't include it in the statement of the following Proposition.

\begin{PROP}[A Lower Bound on the Separation of Trajectories]
	\label{p:Xlb}
	Assume that $\phi:\R\to \R_+$ is a radially decreasing, weakly singular interaction protocol, satisfying 
	\begin{equation}
	\label{e:phiup}
	\phi(x) \le \L|x|^{-s}, \quad |x|<R_0, \quad \text{ for some } s\in (0,1).
	\end{equation}
	Assume $\a,\b\in \O$, with $\a<\b$.  Then whenever $0<X_{\b\a} < 2R_0$, we have 
	\begin{equation}
	\label{e:Xdotlwr}
	\dot{X}_{\b\a} \ge \psi_{\b\a} - \frac{2^s \L M_0}{1-s} X_{\b\a}^{1-s}. 
	\end{equation}
	Consequently, 
	\begin{equation}
	\label{e:Xbalwr}
	\inf_{t\ge 0} X_{\b\a}(t) \ge \min\left\{ \b -\a, \;\; \left( \frac{(1-s)\psi_{\b\a}}{2^s \L M_0} \right)^{\frac{1}{1-s}}, \;\; 2R_0 \right\},
	\end{equation}
	and 
	\begin{equation}
	\label{e:liminfXba}
	\liminf_{t\to +\infty} X_{\b\a} \ge \min\left\{ \left( \frac{(1-s)\psi_{\b\a}}{2^s \L M_0} \right)^{\frac{1}{1-s}}, \;\;2R_0\;\;\right\};
	\end{equation}
	if $\b-\a$ is less than the value on the right side of \eqref{e:liminfXba}, then $X_{\b\a}$ increases monotonically at least until it reaches this value.  
\end{PROP}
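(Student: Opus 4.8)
The plan is to first derive the pointwise differential inequality \eqref{e:Xdotlwr}, and then feed it into an elementary ODE comparison argument to extract the three lower bounds \eqref{e:Xbalwr}, \eqref{e:liminfXba}, and the concluding monotonicity claim.

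For \eqref{e:Xdotlwr}, I would start from the distance equation \eqref{e:dif} (with $\k=1$), which writes $\dot{X}_{\b\a}$ as $\psi_{\b\a}$ minus the double integral $\int_{X_\a}^{X_\b}\int_\O \phi(y-X_\g)\dm_\g\dy$; everything rests on an upper bound for this double integral. Switching the order of integration, it suffices to bound, for each fixed $\g$, the single integral $\int_{X_\a}^{X_\b}\phi(y-X_\g)\dy$. After the substitution $z=y-X_\g$, this becomes the integral of $\phi$ over an interval of fixed length $X_{\b\a}$, and since $\phi$ is nonnegative, even, and non-increasing away from the origin, this integral is largest when the interval is centered at $0$; hence it is at most $2\int_0^{X_{\b\a}/2}\phi(z)\dz$. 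Because $X_{\b\a}<2R_0$ forces $X_{\b\a}/2<R_0$, I may invoke \eqref{e:phiup} and compute $2\int_0^{X_{\b\a}/2}\L z^{-s}\dz=\frac{2^s\L}{1-s}X_{\b\a}^{1-s}$. This bound is uniform in $\g$, so integrating against $\dm_\g$ multiplies it by the total mass $M_0$, which gives \eqref{e:Xdotlwr}.

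For the lower bounds, I would set $f(r)=\psi_{\b\a}-\frac{2^s\L M_0}{1-s}r^{1-s}$, which is continuous and strictly decreasing on $(0,\infty)$ with a unique zero at $r^*=\left(\frac{(1-s)\psi_{\b\a}}{2^s\L M_0}\right)^{1/(1-s)}$ (here $\psi_{\b\a}\ge 0$, since otherwise $\psi_0$ fails the monotonicity of Proposition \ref{p:psibasic} and collision is expected). Writing $L=\min\{r^*,2R_0\}$ for the right side of \eqref{e:liminfXba}, the crux is the subclaim: whenever $0<X_{\b\a}<L$, the inequality \eqref{e:Xdotlwr} applies (as $X_{\b\a}<2R_0$) and yields $\dot{X}_{\b\a}\ge f(X_{\b\a})>0$ (as $X_{\b\a}<r^*$). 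Thus $X_{\b\a}$ is strictly increasing throughout the range $(0,L)$ and can never cross a level in $(0,L)$ downward. A first-crossing argument then gives $X_{\b\a}(t)\ge\min\{\b-\a,L\}$ for all $t$, which is \eqref{e:Xbalwr}; and if $\b-\a<L$, this same strict increase forces $X_{\b\a}$ to climb until it first reaches $L$, which is the final statement.

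The bound \eqref{e:liminfXba} follows from the same strict-increase property, but requires ruling out two scenarios. First, once $X_{\b\a}$ reaches $L$ it can never return below, since any downward crossing of a level in $(0,L)$ is forbidden by $\dot{X}_{\b\a}>0$ there. Second, if $X_{\b\a}$ instead stays below $L$ for all time it increases monotonically to some limit $\ell\le L$; but if $\ell<L\le r^*$ then by continuity $\dot{X}_{\b\a}\ge f(X_{\b\a})\to f(\ell)>0$ for large $t$, which would push $X_{\b\a}$ past $\ell$, a contradiction. Hence $\ell=L$ and $\liminf_{t\to+\infty}X_{\b\a}\ge L$ in every case. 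I expect this degenerate case, where $L=r^*$ so that the comparison function $f$ vanishes exactly at the target level and the approach to it may be only asymptotic, to be the main point requiring care; the remaining comparison steps and the rearrangement estimate of the first step are routine.
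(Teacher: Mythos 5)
Your proof is correct and follows essentially the same route as the paper: the differential inequality \eqref{e:Xdotlwr} is obtained by the identical rearrangement observation (the inner integral is maximized when $X_\g$ is the midpoint of $(X_\a,X_\b)$, giving $2\int_0^{X_{\b\a}/2}\phi\le \frac{2^s\L}{1-s}X_{\b\a}^{1-s}$), followed by integration in $\dm_\g$. The ODE comparison details you supply—the no-downward-crossing argument and the treatment of the asymptotic approach to the level $\min\{r^*,2R_0\}$—are exactly the steps the paper compresses into ``the other statements follow immediately,'' and they are carried out correctly.
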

\begin{proof}
	We prove only the validity of the differential inequality \eqref{e:Xdotlwr}; the other statements follow immediately.  Note that for fixed $\a,\b$, the quantity $\int_{X_\a}^{X_\b} \phi(y - z) \dy$ is maximized when $z = \frac{X_\a + X_\b}{2}$, simply by virtue of the fact that $\phi$ is radially decreasing.  We thus have 
	\[
	\int_{X_\a}^{X_\b} \phi(y - z)\dy \le 2 \int_0^{\frac12 X_{\b\a}} \phi(r)\dr \le \frac{2^s\L}{1-s} X_{\b\a}^{1-s},
	\quad \quad \text{ if } X_{\b\a} \le 2R_0. 
	\]
	Substituting this bound into \eqref{e:dif} (for each $\g\in \O$), we obtain \eqref{e:Xdotlwr} whenever $X_{\b\a}<2R_0$.
\end{proof}

One can prove an upper bound in a similar way, under an assumption of the form $\phi(x)\ge \l |x|^{-s}$ near the origin.  For this bound, we do include the case $s=0$ of bounded kernels. 

\begin{PROP}[An Upper Bound on the Separation of Trajectories]
	\label{p:Xub}
	Assume that $\phi:\R\to \R_+$ is a radially decreasing smooth or weakly singular interaction protocol, satisfying 
	\begin{equation}
	\label{e:philwr}
	\l|x|^{-s} \le \phi(x), \quad |x|<R_0, \quad \text{ for some } s\in [0,1).
	\end{equation}
	For all pairs $(\a,\b)$ satisfying 
	\begin{equation}
	\label{e:uphyp}
	\max\left\{ \b-\a, \left( \frac{(1-s)\psi_{\b\a}}{\l m([\a,\b])} \right)^{\frac{1}{1-s}} \right\} \le R_0, 
	\end{equation}
	one has 
	\begin{equation}
	\label{e:Xdotup}
	\dot{X}_{\b\a} 
	\le \psi_{\b\a} - \frac{\l m([\a,\b])}{1-s}X_{\b\a}^{1-s}.
	\end{equation}
	Consequently, 
	\begin{equation}
	\sup_{t\ge 0} X_{\b\a}(t) \le \max\left\{ \b-\a, \left( \frac{(1-s)\psi_{\b\a}}{\l m([\a,\b])} \right)^{\frac{1}{1-s}} \right\} \le R_0.
	\end{equation}
\end{PROP}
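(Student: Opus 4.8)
The plan is to mirror the structure of the lower-bound proof (Proposition 2.6), but reverse every inequality by using the lower bound $\phi(x)\ge \l|x|^{-s}$ in place of the upper bound, and—crucially—by integrating $\phi$ over the smaller mass $m([\a,\b])$ rather than the full mass $M_0$. The starting point is again equation \eqref{e:dif}. For $\a<\b$ I want a \emph{lower} bound on the inner integral $\int_{X_\a}^{X_\b}\phi(y-X_\g)\dy$, so that its negative contribution to $\dot X_{\b\a}$ produces an \emph{upper} bound on $\dot X_{\b\a}$. The key geometric observation is the opposite of the one used before: whereas the integral is maximized when $X_\g$ sits at the midpoint $\tfrac{X_\a+X_\b}{2}$, it is bounded \emph{below} uniformly in $X_\g$ as long as $X_\g$ lies inside the interval $[X_\a,X_\b]$, because then $|y-X_\g|\le X_{\b\a}$ for all $y$ in the range of integration, and radial monotonicity of $\phi$ gives $\phi(y-X_\g)\ge\phi(X_{\b\a})\ge\l X_{\b\a}^{-s}$ whenever $X_{\b\a}<R_0$.

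First I would restrict the outer integral in \eqref{e:dif} from all of $\O$ down to those $\g\in[\a,\b]$; this is legitimate since the full integrand $\int_{X_\a}^{X_\b}\phi(y-X_\g)\dy$ is nonnegative, so dropping the contribution of $\g\notin[\a,\b]$ can only decrease the subtracted quantity—wait, that goes the wrong way. Instead I would note that for $\g\in[\a,\b]$ we have $X_\g\in[X_\a,X_\b]$ (here the ordering of trajectories, guaranteed by Proposition 2.4 / strict monotonicity of $\psi_0$, or by the running assumption $X_{\b\a}>0$, is what lets me assert $X_\g$ lies between $X_\a$ and $X_\b$), so on this subset I can apply the pointwise bound $\int_{X_\a}^{X_\b}\phi(y-X_\g)\dy\ge\l X_{\b\a}^{-s}\cdot X_{\b\a}=\l X_{\b\a}^{1-s}$. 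Integrating this constant lower bound against $\dm_\g$ over $\g\in[\a,\b]$ yields $\l\, m([\a,\b])\,X_{\b\a}^{1-s}$, and since the remaining $\g\notin[\a,\b]$ contribute a nonnegative amount to the subtracted integral, I obtain $\dot X_{\b\a}\le\psi_{\b\a}-\l\, m([\a,\b])\,X_{\b\a}^{1-s}$. Dividing the exponent issue: matching the paper's stated form \eqref{e:Xdotup} with the factor $\tfrac{1}{1-s}$ suggests that in the actual midpoint computation one integrates $\phi$ over a half-length and picks up $\tfrac{1}{1-s}$ from $\int_0^{X_{\b\a}/2}\l r^{-s}\dr$; I would replicate the integral-based estimate of Proposition 2.6 rather than the crude pointwise one to land on exactly the stated constant.

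From the differential inequality \eqref{e:Xdotup}, the supremum bound is a standard comparison argument: the right side is negative precisely when $X_{\b\a}^{1-s}>\tfrac{(1-s)\psi_{\b\a}}{\l m([\a,\b])}$, i.e. when $X_{\b\a}$ exceeds the threshold $\bigl(\tfrac{(1-s)\psi_{\b\a}}{\l m([\a,\b])}\bigr)^{1/(1-s)}$, so $X_{\b\a}$ cannot climb past $\max\{\b-\a,(\cdots)^{1/(1-s)}\}$; hypothesis \eqref{e:uphyp} ensures this bound stays below $R_0$, which is exactly the range in which the estimate on $\phi$ was valid, closing the argument self-consistently.

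The main obstacle I anticipate is the self-consistency of the $R_0$ constraint: the differential inequality only holds while $X_{\b\a}<R_0$, and one must verify that the comparison argument never drives $X_{\b\a}$ out of this range. Hypothesis \eqref{e:uphyp} is precisely engineered to prevent this, but making the logic airtight requires a short continuity/bootstrap argument (if $X_{\b\a}$ started below $R_0$ and the inequality forbids it from reaching the threshold, it can never reach $R_0$), and one must confirm the strict ordering $X_\g\in[X_\a,X_\b]$ used in the geometric step persists on the relevant time interval.
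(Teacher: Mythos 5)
Your skeleton matches the paper's proof: restrict the outer integral in \eqref{e:dif} to $\g\in[\a,\b]$ (this is legitimate, and your ``wait, that goes the wrong way'' is a false alarm---the dropped contribution is nonnegative and is being \emph{subtracted}, so restricting can only increase the right-hand side, which is the direction you want), use the ordering $X_\g\in[X_\a,X_\b]$, bound the inner integral from below via radial monotonicity, and close with a comparison/bootstrap argument showing \eqref{e:uphyp} keeps $X_{\b\a}$ below $R_0$. The gap is in the key estimate. Your pointwise bound $\phi(y-X_\g)\ge \phi(X_{\b\a})\ge \l X_{\b\a}^{-s}$ only yields $\dot X_{\b\a}\le \psi_{\b\a}-\l\, m([\a,\b])\,X_{\b\a}^{1-s}$, which is weaker than \eqref{e:Xdotup} by the factor $\frac{1}{1-s}$, and your proposed repair is the wrong dual: the midpoint/half-length computation $2\int_0^{X_{\b\a}/2}\phi(r)\dr$ identifies where the inner integral is \emph{maximized} (that is the mechanism in the lower-bound proposition), so it cannot furnish a lower bound valid for every $\g\in[\a,\b]$. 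The observation you need---and this is the paper's one-line key step---is that $z\mapsto\int_{X_\a}^{X_\b}\phi(y-z)\dy$ is \emph{minimized at the endpoints} $z=X_\a$ or $z=X_\b$, where it equals the full-length integral
\[
\int_0^{X_{\b\a}}\phi(r)\dr \;\ge\; \int_0^{X_{\b\a}}\l r^{-s}\dr \;=\; \frac{\l}{1-s}\,X_{\b\a}^{1-s},
\]
valid since $X_{\b\a}\le R_0$. This gives exactly \eqref{e:Xdotup}.

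The lost factor $\frac{1}{1-s}$ is not cosmetic, because hypothesis \eqref{e:uphyp} is calibrated to the sharp constant. With \eqref{e:Xdotup}, the equilibrium threshold of the differential inequality is $\left(\frac{(1-s)\psi_{\b\a}}{\l m([\a,\b])}\right)^{\frac{1}{1-s}}$, which \eqref{e:uphyp} places below $R_0$; that is precisely what closes the bootstrap you flagged as the main obstacle (the separation can never reach $R_0$, hence the inequality stays in force). With your cruder inequality the threshold becomes $\left(\frac{\psi_{\b\a}}{\l m([\a,\b])}\right)^{\frac{1}{1-s}}=(1-s)^{-\frac{1}{1-s}}\left(\frac{(1-s)\psi_{\b\a}}{\l m([\a,\b])}\right)^{\frac{1}{1-s}}$, which for $s\in(0,1)$ exceeds the sharp one and is \emph{not} forced by \eqref{e:uphyp} to be $\le R_0$; the separation could then in principle climb past $R_0$, outside the region where \eqref{e:philwr} applies, and the self-consistency argument collapses. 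So the endpoint-minimization step is genuinely needed, both to obtain the stated constant and to make the $R_0$ bootstrap close.
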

\begin{proof}
	Assume $X_{\b\a}\in (0,R_0)$.  For $z\in (X_\a, X_\b)$, the integral $\int_{X_\a}^{X_\b} \phi(y - z)\dy$ is minimized when $z = X_\a$ or $z=X_\b$, simply by virtue of the fact that $\phi$ is radially decreasing.  Therefore 
	\begin{align*}
	\int_\O \int_{X_\a}^{X_\b} \phi(y - X_\g)\dy\dm_\g
	& \ge \int_{\a}^\b \int_{X_\a}^{X_\b} \phi(y - X_\g)\dy\dm_\g \\
	& \ge m([\a,\b]) \int_0^{X_{\b\a}} \phi(r)\dr  
	\ge \frac{\l m([\a,\b])}{1-s}X_{\b\a}^{1-s}.
	\end{align*}
	Substituting the above into \eqref{e:dif} yields \eqref{e:Xdotup}.
\end{proof}

\begin{REMARK}
Both the Propositions above have applications for the wellposedness theory; see the discussion in the following subsection.  One might also be tempted to build a flocking Theorem out of Proposition \ref{p:Xub}.  This is certainly possible but does not yield a very strong statement, for reasons outlined presently.  The natural hypotheses for a flocking Theorem based on Proposition \ref{p:Xub} would be for $\O$ to be chain connected at scale $R_0/2$ and that some chain $(\a_i)_{i=0}^N$ in $\overline{\O}$ satisfies 
\[
\max\left\{ \a_i - \a_{i-1}, \left( \frac{(1-s)\psi_{\a_{i}\a_{i-1}}}{\l m([\a_{i-1},\a_i])} \right)^{\frac{1}{1-s}}\right\}\le \frac{R_0}{2}, \quad i=1,\ldots, N, 
\] 
with $\O\subset \bigcup_{i=1}^N [\a_{i-1}, \a_i]$; this gives the conclusion that  $\diam\O(t)$ remains uniformly bounded for all time (by $NR_0/2$), and that the velocities align exponentially fast to a constant (following an argument of Morales, Peszek, and Tadmor \cite{MPT2019}, but with simplifications due to the a priori knowledge of the upper bound in Proposition \ref{p:Xub}).  As happy a situation as this might seem on its face, the result is actually rather weak.  Indeed, translating the hypothesis \eqref{e:uphyp} into a condition on $u_0(\b) - u_0(\a)$, we see that it requires
\begin{equation}
\label{e:uphypu}
u_0(\b) - u_0(\a) \le \frac{\l R_0^{1-s}}{1-s} m([\a,\b]) - \int_\a^\b \int_\O \phi(\eta - \g) \dm_\g \dd\eta,
\end{equation}
the right hand side of which need not be positive.  So, this condition can require that the trajectories $X(\a,t)$ and $X(\b,t)$ are initially moving toward each other in order to conclude a uniform upper bound on their separation, whereas we would ideally like an upper bound for particles initially moving away from one another. Despite this shortcoming, Proposition \ref{p:Xub} still has the following useful interpretation: If the trajectories initially at $\a$ and $\b$ (with $0<\b-\a<R_0$) are initially moving towards each other quickly enough (according to \eqref{e:uphypu}), then they can never be separated beyond some threshold distance.  Note that \eqref{e:uphypu} depends on the initial velocity field \textit{only at } $\a$ and $\b$.  So, once this threshold \eqref{e:uphypu} is met, the trajectories cannot be influenced by the velocity field outside $[\a,\b]$ to separate, no matter what this velocity field is.  
\end{REMARK}

\subsection{Wellposedness}

The wellposedness of the Euler Alignment model with weakly singular kernels has been analyzed by Tan in \cite{Tan2019WeakSing}. Tan's results can be summarized as follows:
\begin{itemize}
	\item The case $\inf e_0<0$ yields blowup, by essentially the same argument as for smooth kernels. 
	\item If $\inf e_0>0$, then $\phi*\rho$ satisfies a nonlinear maximum principle, which one can use to prove a uniform-in-time bound on $\|\rho(t)\|_{L^\infty}$, thus on $\|\phi*\rho(t)\|_{L^\infty}$ and then $\|\p_x u(t)\|_{L^\infty}$.  (Note that the argument of \cite{CCTT2016} does not immediately apply here.)
	\item If $e_0\equiv 0$ on an interval $[\a,\b]$ with $m([\a,\b])>0$, then $X_{\b\a}\to 0$ in finite time, giving finite-time blowup of the density.  This distinguishes the wellposedness theory for smooth and weakly singular kernels.  (The case where $e_0=0$ on a set of $m$-measure zero is left unresolved.)  
\end{itemize}

We now compare Tan's results with what can be obtained using the function $\psi_0$.  The conclusion for the case $\inf e_0<0$ is apparent from Proposition \ref{p:psibasic}.  One can also adapt Tan's nonlinear maximum principle argument to get a lower bound on $\p_\a X$ under the assumption that $\inf_\O \psi_0'>0$.  However, we do not give the details, since this does not provide any new information.  Tan's analysis of the case $e_0\equiv 0$ on $[\a,\b]$ relies on the inequality \eqref{e:Xdotup} in the special case $\psi_{\b\a} = 0$; this was available even without using the function $\psi_0$ due to the fact that the set $\{e_0 \equiv 0\}$ is invariant in time on the interval of existence.  

We now discuss the case where $e_0\ge 0$, but $e_0 = 0$ at a finite number of points inside $\O$; this case is not treated by \cite{Tan2019WeakSing}.  From our Proposition \ref{p:psibasic}, we can already see that such points cannot prevent the flow map from being bijective, since $\psi_0$ will still be strictly increasing.  What is not clear is whether or not the density can become infinite (or equivalently, whether $\p_\a X$ can become zero) at or near such points.  This is where the lower bound \eqref{e:Xbalwr} becomes useful.  

Let us recall that we have proved in Section \ref{s:pf} the lower bound \eqref{e:paXlwr} on $\p_\a X$ under the assumption \eqref{e:convbd}.  For $\phi$ satisfying \eqref{e:philwr} in the restricted range $s\in (0,\frac12)$, we make an additional assumption on $\psi_0$ that is weaker than Tan's, but which will still allow us to conclude \eqref{e:convbd}.  

Let us first of all assume that our $\psi_0$ satisfies 
\begin{equation}
\label{e:wkTan}
\psi_{\b\a}\ge c(\b - \a)^\mu,
\quad \quad 0\le (\b - \a)<R_1, \;\a,\b\in \O,
\end{equation}
for some $R_1<R_0$ $(R_1\ll 1)$ and some $\mu>1$.  This guarantees that $\psi_0$ is strictly increasing but allows for the possibility that its derivative vanishes somewhere.  Clearly, the larger $\mu$ is, the less stringent a requirement \eqref{e:wkTan} is; we now determine the range of $\mu$'s that will allow us to establish the upper bound \eqref{e:convbd} and thus the global-in-time existence of a solution.  Assuming $\psi_0$ satisfies \eqref{e:wkTan}, the lower bound \eqref{e:Xbalwr} guarantees that for $|\b - \a|<R_1$, we have 
\begin{equation}
\inf_{t\ge 0} |X_{\b\a}(t)| \ge \left( \frac{(1-s)|\psi_{\b\a}|}{2^s \L M_0} \right)^{\frac{1}{1-s}}\ge \left( \frac{(1-s)c|\b - \a|^\mu}{2^s \L M_0} \right)^{\frac{1}{1-s}}.
\end{equation}
Denote 
\[
R_2 := \left( \frac{(1-s)cR_1^\mu}{2^s \L M_0} \right)^{\frac{1}{1-s}},
\quad \quad 
\widetilde{c} = \left( \frac{(1-s)c}{2^s \L M_0} \right)^{\frac{1}{1-s}}.
\]
Then 
\begin{align*}
\int_\O \phi(X_{\a\g})\dm_\g 
& = \int_{\O\cap \{|\a-\g|\ge R_1\}} \phi(X_{\a\g})\dm_\g + \int_{\O\cap \{|\a-\g|< R_1\}} \phi(X_{\a\g})\dm_\g \\
& \le \phi(R_2) M_0 + \L \widetilde{c}^{-s}\int_{\O\cap |\a - \g|<R_1} |\a - \g|^{-\frac{\mu s}{1-s}}\dm_\g, 
\end{align*}
which is uniformly bounded by a constant if $\rho_0$ is bounded (which we assume) and if $\frac{\mu s}{1-s}<1$.  Rearranging the latter yields 
\begin{equation}
1<\mu<\frac{1-s}{s},
\end{equation}
which is a nonempty range when $s\in (0,\frac12)$.  

We can now give the following Theorem.  The hypotheses are adapted so that we can use the local existence theory for weakly singular kernels in \cite{Tan2019WeakSing}.
\begin{THEOREM}
Assume $\phi$ is a radially decreasing, weakly singular kernel satisfying \eqref{e:phiup} for some $s\in (0,1)$.  Assume that $\O$ is a finite union of disjoint open intervals: $\O = \bigcup_{j=0}^N (\ell_j, r_j)$, with $r_{j-1}<\ell_j$ for each $j\in \{1,\ldots, J\}$.  Assume $\rho_0\in H^k(\R)$ and $\psi_0\in H^{k+1}(\O)$, $k\ge 1$, with $\psi_0=u_0 + \f*\rho_0$ strictly increasing on $\overline{\O}$. Then there exists an associated global-in-time solution $(\rho,u)$ to \eqref{e:m} under either of the following assumptions:
\begin{enumerate}
\item $\inf_{\O} \psi_0'>0$, or 
\item $s\in (0,\frac12)$, and $\psi_0$ satisfies \eqref{e:wkTan} for some $\mu \in (1,\frac{1-s}{s})$.
\end{enumerate}
\end{THEOREM}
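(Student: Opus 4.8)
The plan is to reduce the theorem to a single a priori estimate, namely the uniform-in-time convolution bound \eqref{e:convbd}, and then to treat hypotheses (1) and (2) as two separate routes to that estimate. First I would invoke the local existence theory of \cite{Tan2019WeakSing} (the regularity hypotheses $\rho_0\in H^k(\R)$, $\psi_0\in H^{k+1}(\O)$, $k\ge 1$, are tailored to it) to obtain a solution on a maximal interval of existence. The engine of the continuation is the flow-map identity obtained by differentiating \eqref{e:1Dchar} in $\a$ and using $\dot X_\a=u(X_\a,t)$, which gives $\p_x u(X_\a,t)=\psi_0'(\a)/\p_\a X_\a-(\phi*\rho)(X_\a,t)$, together with the change-of-variables fact $(\phi*\rho)(X_\a,t)=\int_\O\phi(X_{\a\g})\dm_\g$. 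Once \eqref{e:convbd} is known on the interval of existence, Proposition \ref{p:paXlwr} (whose remaining hypothesis $\psi_0'\ge 0$ holds because $\psi_0$ is strictly increasing) yields $\p_\a X_\a(t)\ge e^{-Ct}>0$; since $\psi_0\in H^{k+1}(\O)\hookrightarrow C^1$ makes $\psi_0'$ bounded and $\phi*\rho$ is controlled by \eqref{e:convbd}, the identity shows $\|\p_x u(t)\|_{L^\infty}$ stays finite on every bounded interval, which is exactly the continuation criterion \eqref{e:BKM}. Bijectivity of the flow is supplied independently by Proposition \ref{p:psibasic} from the strict monotonicity of $\psi_0$ on $\overline\O$. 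Hence everything comes down to proving \eqref{e:convbd}.

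Under hypothesis (1), $\inf_\O\psi_0'>0$ is the same as $\inf_\O e_0>0$, the strictly subcritical regime. Here I would adapt the nonlinear maximum principle of \cite{Tan2019WeakSing} to obtain a uniform bound on $\|\rho(t)\|_{L^\infty}$, hence on $\|\phi*\rho(t)\|_{L^\infty}$, which is precisely \eqref{e:convbd}; this argument is insensitive to the order of the singularity and so works for all $s\in(0,1)$. Equivalently, and with no new content, one can extend $\psi_0$ to $\widetilde\psi_0$ on $\R$ with $\inf_\R\widetilde\psi_0'>0$, set $\widetilde u_0=\widetilde\psi_0-\f*\rho_0$, apply Tan's global existence for $\inf e_0>0$ to this (VV) datum, and restrict to $\O(t)$.

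Hypothesis (2) is the genuinely new case and the crux. Now $\psi_0'$ may vanish, so the maximum-principle route is unavailable; instead I would exploit the fact that the separation lower bound of Proposition \ref{p:Xlb} is \emph{unconditional}, depending only on the upper bound \eqref{e:phiup} and not on \eqref{e:convbd}. Feeding the growth hypothesis \eqref{e:wkTan}, $\psi_{\b\a}\ge c(\b-\a)^\mu$, into \eqref{e:Xbalwr} gives the time-uniform bound $\inf_{t\ge 0}X_{\a\g}\ge \widetilde c\,|\a-\g|^{\mu/(1-s)}$ for $|\a-\g|<R_1$. I would then split $\int_\O\phi(X_{\a\g})\dm_\g$ into the region $\{|\a-\g|\ge R_1\}$, where $X_{\a\g}$ stays bounded below away from zero and the integral is at most $\phi(R_2)M_0$, and the region $\{|\a-\g|<R_1\}$, where $\phi(X_{\a\g})\le\L X_{\a\g}^{-s}\le \L\widetilde c^{-s}|\a-\g|^{-\mu s/(1-s)}$. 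The near integral $\int|\a-\g|^{-\mu s/(1-s)}\dm_\g$ is bounded uniformly in $\a$ (using $\rho_0\in L^\infty$) exactly when $\mu s/(1-s)<1$, i.e. $\mu<(1-s)/s$; this range is nonempty precisely when $s<\tfrac12$, which is the source of the restriction in (2). This yields \eqref{e:convbd} with a time-independent constant, completing the reduction.

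The main obstacle is this estimate in case (2): one must verify \eqref{e:convbd} for a singular kernel \emph{without} any pointwise lower bound on $\psi_0'$, and the delicate point is matching the time-uniform separation scaling $X_{\a\g}\gtrsim|\a-\g|^{\mu/(1-s)}$ against the local integrability threshold of the singularity. The composite exponent $\mu s/(1-s)$ and the resulting constraint $s\in(0,\tfrac12)$ are the heart of why Tan's hypothesis on $\psi_0$ can be relaxed only in this restricted singularity range. The remaining steps — local existence, the flow-map identity for $\p_x u$, and the passage from \eqref{e:convbd} to \eqref{e:BKM} — are routine once the key estimate is in hand.
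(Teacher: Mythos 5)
Your key estimate in case (2) --- feeding \eqref{e:wkTan} into the unconditional lower bound \eqref{e:Xbalwr} to get the time-uniform separation $X_{\a\g}\ge \widetilde c\,|\a-\g|^{\mu/(1-s)}$, splitting the convolution at scale $R_1$, and using $\mu s/(1-s)<1$ to conclude \eqref{e:convbd} --- is exactly the paper's computation, and your treatment of hypothesis (1) (extend $\psi_0$ with positive derivative and invoke Tan's subcritical result) also matches the paper. The gap is in how you close the continuation argument in case (2). You invoke the local existence theory of \cite{Tan2019WeakSing}, but that theory is for the formulation (VV): it requires initial data defined on all of $\R$. In case (2) you never extend $\psi_0$ (equivalently $u_0$) off $\O$, so there is nothing to which Tan's local theory applies. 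If you do extend, then the continuation criterion \eqref{e:BKM} for the resulting (VV) solution requires control of $\n u$ on all of $\R$ --- including the vacuum --- whereas your flow-map identity bounds $\p_x u$ only along characteristics emanating from $\O$, i.e.\ only on $\O(t)$. These two halves do not meet: an (NVV)-only a priori bound cannot feed a (VV) continuation criterion, and a purely Lagrangian local theory for weakly singular kernels (which would let you stay inside $\O(t)$ throughout) is not established here --- the paper's hypotheses are explicitly tailored to borrow Tan's (VV) theory, and the Lagrangian right-hand side involves $\f$, which is only H\"older (not Lipschitz) for singular $\phi$, so even local solvability of \eqref{e:1Dchar} is not free.

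The paper closes exactly this gap with two steps you omit: (i) it chooses the extension $\widetilde\psi_0$ to be increasing with $\widetilde\psi_0'\in H^k(\R)$, so that $e_0\ge 0$ everywhere and no crossing of characteristics can originate in the vacuum; (ii) it upgrades \eqref{e:convbd} to a bound on $\phi*\rho(y,t)$ for \emph{all} $y\in\R$, using that $\phi$ is radially decreasing, so that for $y$ in a gap between components of $\O(t)$ the integrand $\phi(y-X_\g)$ is dominated by its values at the neighboring boundary trajectories, and for $y$ far from $\O(t)$ one simply uses $\phi(R_1)M_0$. With $\phi*\rho$ bounded in space and time, the Riccati equation $\dot e=-e(e-\phi*\rho)$ of \cite{CCTT2016} gives a uniform bound on $\|e(t)\|_{L^\infty(\R)}$, hence on $\|\n u\|_{L^\infty(\R)}$, which is the criterion \eqref{e:BKM}; the (NVV) solution is then the restriction to $\O(t)$. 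Your identity $\p_x u(X_\a,t)=\psi_0'(\a)/\p_\a X_\a-(\phi*\rho)(X_\a,t)$ together with Proposition \ref{p:paXlwr} is a perfectly good substitute for the Riccati argument \emph{on} $\O(t)$, but it says nothing about the vacuum region, which is where your proof currently has a hole.
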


\begin{proof}
We use the extension/restriction argument suggested by Remark \ref{r:extrest}, with some additional verifications under the second assumption.  As a byproduct of the proof, we also obtain a solution to the formulation (VV) associated to any extension of $\psi_0$ to all of $\R$ that satisfies the requirements outlined in the proof.

Under the first assumption, $\psi_0$ has an extension $\widetilde{\psi}_0$ satisfying $\widetilde{\psi}_0'\in H^{k}(\R)$ and $\inf_{\R} \psi_0'>0$.  Thus the result of Tan applies, and no further argument is needed. If $\psi_0$ satisfies the second assumption, we can again extend it to a strictly increasing $\widetilde{\psi}_0$ on all of $\R$, with $\widetilde{\psi}_0'\in H^k(\R)$.  However, Tan's result does not apply, since we may have $\widetilde{\psi}_0'=0$ somewhere; therefore we argue as follows.

We have already observed that the second assumption guarantees a uniform-in-time upper bound on  the quantity 
\[
\sup_{\a\in \O} \int_\O \phi(X_{\a\g})\dm_\g = \sup_{\a\in \O} \phi*\rho(X(\a,t),t).
\]
The latter implies a uniform upper bound on $\phi*\rho(y,t) = \int_{\O} \phi(y - X_\g)\dm_\g$ on all of $\R$.  Indeed, for $y\notin \O(t)$, we can use the trivial estimates 
\[
\int_\O \phi(y - X_\g)\dm_\g \le \int_\O \sum_{j=0}^N \phi(X_{\ell_i \g}) + \phi(X_{r_i \g}) \dm_\g 
\le 2(N+1)C
\]
if $\dist{y}{\O(t)}$ is small (less than $R_1$, say), and 
\[
\int_\O \phi(y - X_\g)\dm_\g \le \phi(R_1) M_0,
\]
otherwise.  So, we have a bound on $\phi*\rho$ that is uniform in time and space.  We can thus use the original argument of \cite{CCTT2016} to get a uniform bound on $\|e(t)\|_{L^\infty(\R)}$ and thus $\|\n u\|_{L^\infty(\R)}$.
\end{proof}

\begin{REMARK}
In assumption (2), we need $\psi_0$ to be increasing on $\overline{\O}$ rather than just on $\O$ only so that it is possible to obtain an extension $\widetilde{\psi}_0$ with the same smoothness as $\psi_0$.  The assumption \eqref{e:wkTan} is enough to guarantee that $X_{\ell_i r_{i-1}}$ remains strictly positive for all time even if $\psi_{\ell_i r_{i-1}} = 0$.  To see this, one can adapt the argument of Proposition \ref{p:Xlb}, taking into account that $\int_\O \phi(y-X_\g)\dm_\g$ is uniformly bounded in time and space if \eqref{e:wkTan} holds.  
\end{REMARK}

In closing, we remark that the value $s=\frac12$ plays a distinguished role in the theory for weakly singular kernels at the discrete and the kinetic levels (in any dimension), at least in terms of what can be proven about the equations.  See the works \cite{Pes2014}, \cite{Pes2015}, \cite{MuchaPeszek2018}, \cite{Minakowski2019}.  Future research may reveal that this role is a technical rather than a fundamental one; nevertheless, it would be interesting to understand whether there is a connection between the reasons this barrier appears at the particle/kinetic and at the hydrodynamic levels.  The two are not obviously related by the analysis above; in works on the discrete and kinetic models, the significance of $s=\frac12$ lies in the fact that $\phi$ is square integrable for $s\in (0,\frac12)$, whereas this fact does not appear to play a role in the analysis above.

%\bibliographystyle{plain}
%\bibliography{bib12020}

\def\cprime{$'$}

\end{document}